\newtheorem{theorem}{Theorem}[section]
\newtheorem{proposition}[theorem]{Proposition}
\theoremstyle{definition}
\newtheorem{remark}[theorem]{Remark}
\def\cqfd{
{\hfill
\kern 6pt\penalty 500
\raise -1pt\hbox{\vrule\vbox to 5pt{\hrule width 4pt
\vfill\hrule}\vrule}}
\break}
\title{Optimal and maximal singular curves}
\author[Aubry]{Yves Aubry}
\address[Aubry]{Institut de Math\'ematiques de Toulon, Universit\'e de Toulon, France}
\address[Aubry]{Institut de Math\'ematiques de Marseille, CNRS-UMR 7373, Aix-Marseille Universit\'e, France}
\email{yves.aubry@univ-tln.fr}
\author[Iezzi]{Annamaria Iezzi}
\address[Iezzi]{Institut de Math\'ematiques de Marseille, CNRS-UMR 7373, Aix-Marseille Universit\'e, France}
\email{annamaria.iezzi@univ-amu.fr}
\begin{document}

\maketitle

\begin{abstract}
Using an Euclidean approach, we prove a new upper bound for the number of closed points of degree 2 on a smooth absolutely irreducible projective algebraic curve defined over the finite field $\mathbb F_q$.
This bound enables us to provide explicit conditions on $q, g$ and $\pi$  for the non-existence of absolutely irreducible projective algebraic curves defined over $\mathbb F_q$ of geometric genus $g$, arithmetic genus $\pi$ and with $N_q(g)+\pi-g$ rational points.
Moreover, for $q$ a square, we study the set of pairs $(g,\pi)$ for which there exists a maximal absolutely irreducible projective algebraic  curve defined over $\mathbb F_q$ of geometric genus $g$ and arithmetic genus $\pi$, i.e. with $q+1+2g\sqrt{q}+\pi-g$ rational  points.

\bigskip

Keywords: Singular curve, maximal curve, finite field, rational point.\\
MSC[2010] 14H20, 11G20, 14G15.
\end{abstract}


\section{Introduction}
Throughout the paper, the word \emph{curve} will stand for an absolutely irreducible projective algebraic curve and $\mathbb F_q$ will denote the finite field with $q$ elements.


Let $X$ be a curve defined over $\mathbb F_q$  of geometric genus $g$ and arithmetic genus $\pi$. The first author  and Perret showed in \cite{A-P} that
the number $\sharp X(\mathbb F_q)$ of rational points over ${\mathbb F}_q$ on $X$ verifies:
\begin{equation}\label{APbound}\sharp X(\mathbb F_q)\leq q+1+g[2\sqrt{q}]+ \pi-g.\end{equation}

Furthermore if we denote  by $N_q(g,\pi)$ the maximum number of rational points on a curve defined over $\mathbb F_q$ of geometric genus $g$ and arithmetic genus $\pi$, it is  proved in \cite{A-I} that:
$$
N_q(g)\leq N_q(g,\pi)\leq N_q(g) +\pi-g,
$$
where $N_q(g)$ classically denotes the maximum number of rational points over $\mathbb F_q$ on a smooth curve defined  over $\mathbb F_q$ of genus $g$.

The curve $X$  is said to be  \emph{maximal} if it attains the bound ($\ref{APbound}$).
This definition for non-necessarily smooth curves has been introduced in \cite{A-I} and recovers the classical definition of maximal curve when $X$ is smooth.

More generally (see \cite{A-I}), $X$ is said to be  $\delta$\emph{-optimal} if
$$\sharp{X}({\mathbb F}_q)= N_q(g)+ \pi-g.$$

Obviously the set of maximal curves is contained in that  of $\delta$-optimal ones.

\bigskip

In  \cite{A-I} we were interested in the existence of   $\delta$-optimal and maximal curves of prescribed geometric and arithmetic genera.
 Precisely, we proved (see Theorem 5.3 in  \cite{A-I}):
 \begin{equation}\label{iff}N_q(g,\pi)= N_q(g)+\pi-g
\ \ \Longleftrightarrow\ \  g\leq \pi \leq g+B_2({\mathcal X}_q(g)),\end{equation}
where ${\mathcal X}_q(g)$ denotes the set of optimal smooth curves defined over $\mathbb F_q$ of genus $g$ (i.e. with $N_q(g)$ rational points) and $B_2({\mathcal X}_q(g))$  the maximum number of closed points of degree 2 on a curve of ${\mathcal X}_q(g)$.

The quantity $B_2({\mathcal X}_q(g))$ is easy to calculate for $g$ equal to $0$ and $1$ and also for those $g$ for which $N_q(g)=q+1+g[2\sqrt{q}]$ (see Corollary 5.4, Corollary 5.5 and Proposition 5.8 in \cite{A-I}), but is not explicit in the general case.

\bigskip

The first  aim of this paper is  to provide upper and lower bounds for $B_2({\mathcal X}_q(g))$.
 For this purpose we will follow
the Euclidean approach developed by Hallouin and Perret in \cite{H-P} and recalled in Section \ref{recall}. These new bounds will allow us
to provide explicit conditions on $q, g$ and $\pi$  for the non-existence of $\delta$-optimal curves and to
determine some exact values of $N_q(g,\pi)$ for specific  triplets $(q,g,\pi)$.


Secondly, in Section \ref{spectrum}, we will assume  $q$ to be square and, as it is done in the smooth case, we will study the genera spectrum of  maximal curves defined over $\mathbb F_q$, i.e. the set of couples $(g,\pi)$, with $g,\pi \in \mathbb N $ and $g\leq \pi$, for which  there exists a maximal curve defined over $\mathbb F_q $  of geometric genus  $g$ and arithmetic genus $\pi$.



\section{The Hallouin-Perret's approach}\label{recall}

Let $X$ be a smooth curve defined over $\mathbb F_q$ of genus $g>0$.


For every positive $ n\in \mathbb N$,  we associate to $X$ a $n$-tuple $(x_1,\ldots,x_n)$ defined as follows:
\begin{equation}\label{xi}x_i:=\frac{(q^i+1)-\sharp X(\mathbb F_{q^i})}{2g\sqrt{q^i}}, \quad i=1,\ldots, n.\end{equation}
The Riemann Hypothesis proved by Weil gives that
\begin{equation}\label{weil}\sharp X(\mathbb F_{q^i})=q^i +1-\sum_{j=1}^{2g}\omega_j^i,\end{equation}
where $\omega_1,\ldots, \omega_{2g}$ are  complex numbers of absolute value $\sqrt q$. Hence one easily gets $|x_i|\leq 1$ for all $i=1,\ldots, n$, which means that the $n$-tuple $(x_1,\ldots,x_n)$ belongs to the hypercube
\begin{equation}\label{cube}\mathcal C_n=\{(x_1,\ldots,x_n)\in \mathbb R^n|-1\leq x_i\leq1,\,\ \forall\,i=1,\ldots,n\}.\end{equation}

The Hodge Index Theorem implies that the intersection pairing on the Neron-Severi space over $\mathbb R$ of the smooth algebraic surface $X\times X$ is anti-Euclidean on the orthogonal complement of the trivial plane generated by the horizontal and vertical classes. Hallouin and Perret used  this fact in   \cite{H-P}
to obtain that the following matrix
$$G_n=\left(\begin{matrix}
1 &x_1 & \cdots& x_{n-1} & x_n\\
x_1 &1&x_1&\ddots& x_{n-1} \\
\vdots &\ddots& \ddots &\ddots &\vdots \\
x_{n-1} &\ddots& \ddots &1 &x_1 \\
x_n &x_{n-1}& \cdots &x_1 &1 \\
\end{matrix}\right)$$
is
a Gram matrix
and then   positive semidefinite (the $x_i$'s are interpreted as inner products of
 normalized Neron-Severi classes of the iterated Frobenius morphisms).




Now, a matrix is positive semidefinite if and only if all the principal minors are non-negative. This fact implies that the $n$-tuple $(x_1,\ldots,x_n)$ has to belong to the set
\begin{equation}\label{omega}\mathcal W_n=\{(x_1,\ldots,x_n)\in \mathbb R^n|\,G_{n,I}\geq 0,\,\forall\,I\subset\{1,\ldots,n+1\}\},\end{equation}
where  $G_{n,I}$ represents the principal minor of $G_n$ obtained by deleting the lines and columns whose indexes are not in $I$.

\vspace{5 pt}
To these relations that come from the geometrical point of view, one can  add the arithmetical contraints resulting from the obvious inequalities  $\sharp X(\mathbb F_{q^i})\geq \sharp X(\mathbb F_q)$, for all $i\geq 2$. It follows that, for all $i\geq 2$, $$x_i\leq \frac{x_1}{q^{\frac{i-1}{2}}}+\frac{q^{i-1}-1}{2gq^{\frac{i-2}{2}}}.$$
Setting  $$h_i^{q,g}(x_1,x_i)=x_i-\frac{x_1}{\sqrt{q}^{i-1}}-\frac{\sqrt{q}}{2g}\left(\sqrt{q}^{i-1}-\frac{1}{\sqrt{q}^{i-1}}\right)$$
one gets that the $n$-tuple $(x_1,\ldots,x_n)$ has to belong to the set
\begin{equation}\label{acca}\mathcal H_n^{q,g}=\{(x_1,\ldots,x_n)\in \mathbb R^n|h_i^{q,g}(x_1,x_i)\leq0, \textrm{ for all }2\leq i\leq n\}.\end{equation}
We  assume that $\mathcal H_1^{q,g}=\mathbb R$.
\begin{remark}\label{remark1}
We have  $h_i^{q,g}(x_1,x_i)=0$ if and only if  $\sharp X(\mathbb F_q)=\sharp X(\mathbb F_{q^i}$).
\end{remark}

Finally we obtain (Proposition 16 in \cite{H-P}) that if $X$ is a smooth curve defined over $\mathbb F_q$ of genus $g>0$, then its \mbox{associated} $n$-tuple $(x_1,\ldots,x_n)$ belongs to $ \mathcal C_n \cap \mathcal W_n \cap \mathcal H_n^{q,g}$, where $\mathcal C_n, \mathcal W_n, \mathcal H_n^{q,g}$ are respectively defined by $(\ref{cube})$, $(\ref{omega})$ and $(\ref{acca})$.

\bigskip

Fixing $n=1,2,3,\ldots$ we find bounded subsets of $\mathbb R^n$ to which the $n$-tuple $(x_1,\ldots,x_n)$ belongs. Hence we can  obtain lower or upper bounds for  $\sharp X(\mathbb F_{q^i} )$ by remarking that any  lower bound for $x_i$ corresponds to an upper bound for $\sharp X(\mathbb F_{q^i} )$ and, conversely, any upper bound for $x_i$ corresponds to a lower bound for $\sharp X(\mathbb F_{q^i} )$.


Hallouin and Perret  showed in  \cite{H-P} that, increasing the dimension $n$, the set $\mathcal C_n\cap \mathcal W_n\cap \mathcal H_n^{q,g}$ provides a better and better  lower bound for $x_1$  (and hence a better and better upper bound for $\sharp X(\mathbb F_q)$) if $g$ is quite big compared to $q$.

 Indeed they first recovered, for $n=1$, the classical Weil bound, that can  be seen as a \textit{first-order Weil bound}:
$$\sharp X(\mathbb F_q)\leq q+1+2g\sqrt{q}.$$

 For $n=2$, they found again the Ihara bound (to which they refered as the  \textit{second-order Weil bound}): if $g\geq g_2:=\frac{\sqrt{q}(\sqrt{q}-1)}{2}$ then
 $$\sharp X(\mathbb F_q)\leq q+1+\frac{\sqrt{(8q+1)g^2+4q(q-1)g}-g}{2}.$$

Finally, for $n=3$, they found a \textit{third-order Weil bound}: if $g\geq g_3:=\frac{\sqrt{q}(q-1)}{\sqrt{2}}$ then (Theorem 18 in \cite{H-P}):
$$\sharp X(\mathbb F_q)\leq q+1+\left(\sqrt{a(q)+\frac{b(q)}{g}+\frac{c(q)}{g^2}}-\frac{q-1}{q}+\frac{2\sqrt{q}(q-1)^2}{gq}\right)g\sqrt{q},$$
where
$$\left\{
\begin{array}{l}
a(q)=5+\frac{8}{\sqrt{q}}-\frac{1}{q^2}\\
b(q)=\frac{q-1}{q\sqrt{q}}\left(q^2-4q\sqrt{q}+2q+4\sqrt{q}-1\right)\\
c(q)=\frac{q-1}{4q}\left(q^3-5q^2-8q\sqrt{q}-5q-8\sqrt{q}+1\right).\\
\end{array}\right.$$

In a similar way, we would like to find better and better lower bounds for $x_2$ (possibly in function of $x_1$), in order to provide new upper bounds for $\sharp X(\mathbb F_{q^2})$. From each of these bounds we will deduce a new upper bound for the number of closed points  of degree 2 on $X$ and hence we will  be able to precise our equivalence  $(\ref{iff})$.

\section{Number of closed points of degree 2}



 Let $X$ be a smooth curve defined over $\mathbb F_q$ of genus $g$.  We recall that, if $B_2(X)$ denotes the number of closed points of degree 2 on $X$, one has
  $$B_2(X)=\frac{\sharp X(\mathbb F_{q^2})-\sharp X(\mathbb F_q) }{2}.$$

  \subsection{Upper bounds}
  We are going to establish upper bounds for the number $B_2(X)$  and then obtain upper bounds for
  the quantity $B_2({\mathcal X}_q(g))$  defined as the maximum number of closed points of degree 2 on an optimal smooth  curve of genus $g$ defined over ${\mathbb F}_q$.

  \subsubsection{The first order}
From the Weil bounds related to $(\ref{weil})$, we get
$\sharp X(\mathbb F_{q^2}) \leq q^2+1+2gq$ and $ \sharp X(\mathbb F_{q})\geq q+1 -2g\sqrt{q}$. Hence  an obvious upper bound for $B_2(X)$ is:
\begin{equation}\label{obvious}B_2(X)\leq \frac{q^2-q}{2}+g(q+\sqrt{q})=: M'(q,g). \end{equation}
We can consider $M'(q,g)$ as an upper bound  for $B_2(\mathcal X_q(g))$ at the first order since this bound is a direct consequence of the Weil bounds.

Using the quantity $M'(q,g)$,  we have recorded in the following table some first-order upper bounds for $B_2(\mathcal X_q(g))$  for specific couples $(q,g)$:  

\begin{center}
\begin{table}[H]
\begin{tabular}{|l|>{\centering\arraybackslash}p{1cm}|>{\centering\arraybackslash}p{1cm}|>{\centering\arraybackslash}p{1cm}|>{\centering\arraybackslash}p{1cm}|>{\centering\arraybackslash}p{1cm}|}
\hline
\backslashbox{q}{g}
&$2$&$3$&$4$&$5$&$6$\\\hline
$2$&$7$&$11$&$14$&$18$&$21$\\\hline
$3$&$12$&$17$&$21$&$26$&$31$\\\hline
$2^2$&$18$&$24$&$30$&$36$&$42$\\\hline
\end{tabular}
\bigskip
\caption{\footnotesize First-order upper bounds for  $B_2(\mathcal X_q(g))$ given by $M'(q,g)$.}
\end{table}
\end{center}

Unfortunately, the bound $(\ref{obvious})$ is rather bad,  so let us improve it.

We assume $g$ to be positive and   we consider $B_2(X)$ as a function of $x_1$ and $x_2$, defined in $(\ref{xi})$,  in the domain $\mathcal C_n \cap \mathcal W_n \cap \mathcal H_n^{q,g} $ to which $x_1$ and $x_2$ belong:
\begin{equation}\label{b2}B_2(X)=g\sqrt{q}(x_1-\sqrt{q}x_2)+\frac{q^2-q}{2}\end{equation}
since
 $\sharp X(\mathbb F_q)= q+1-2g\sqrt{q}x_1$ and
$\sharp X(\mathbb F_{q^2})=q^2+1-2gqx_2.$

We remark that any lower bound for $x_2$ implies an upper bound for $B_2(X)$, possibly in function of $x_1$.

We are going to investigate the set $\mathcal C_n \cap \mathcal W_n \cap \mathcal H_n^{q,g} $ introduced in the previous section   for $n=2$ (second order) and   $n=3$ (third order).

\subsubsection{The second order}
For $n=2$ the set $\mathcal C_2 \cap \mathcal W_2 \cap \mathcal H_2^{q,g}$ is given by the couples $(x_1,x_2)\in \mathbb R^2$ which satisfy the following system of inequalities:
\begin{equation}\label{system1}\left\{\begin{array}{l}
2x_1^2-1\leq x_2\leq 1\\
x_2\leq \frac{x_1}{\sqrt{q}}+\frac{q-1}{2g}.
\end{array} \right.\end{equation}

Geometrically it corresponds to the region of the plane $<x_1, x_2>$ delimited by the parabola $P: x_2=2x_1^2-1$ and  the lines $ L_2^{q,g}:x_2=\frac{x_1}{\sqrt{q}}+\frac{q-1}{2g}$ and  $x_2=1$. More precisely, depending on whether $g<g_2$, $g=g_2$ or $ g>g_2$, where $g_2=\frac{\sqrt{q}(\sqrt{q}-1)}{2}$, the region can assume  one  of the following three configurations:

\begin{table}[H]

\begin{center}
\begin{tabular}{lcr}
\includegraphics[height=4.8cm]{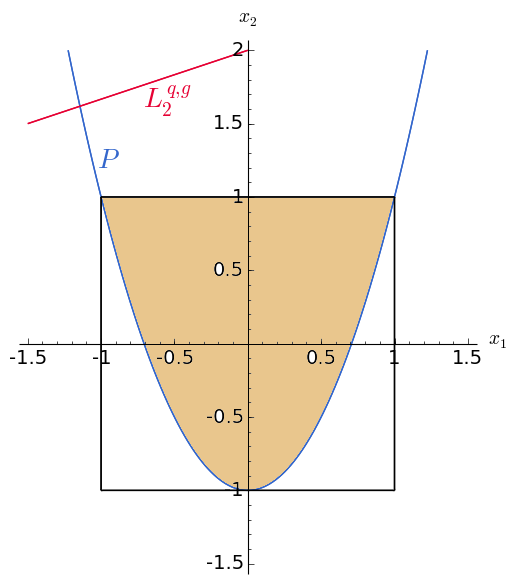} &\includegraphics[height=4.8
cm]{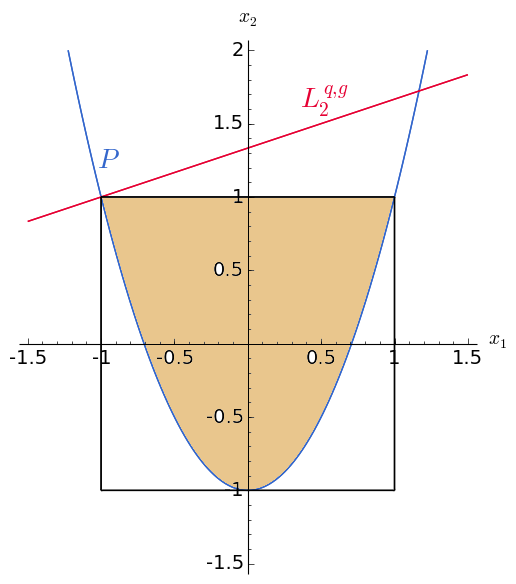}   &\includegraphics[height=4.8cm]{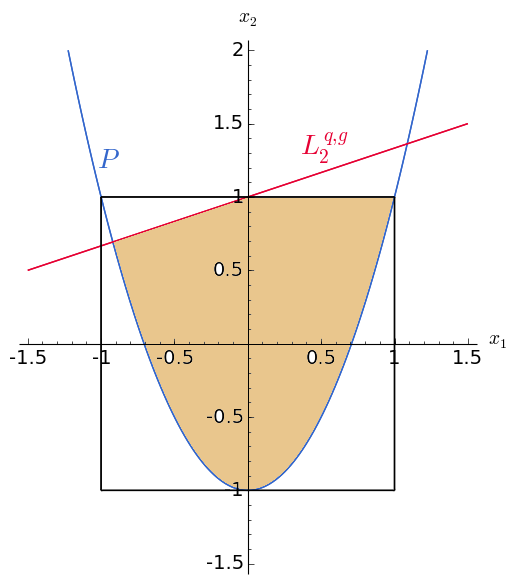}  \end{tabular}
\centering{
\caption{\footnotesize The region $\mathcal C_2\cap\mathcal W_2 \cap \mathcal H_2^{q,g}$, respectively for $g<g_2$, $g=g_2$ and $ g>g_2$.}}
\label{figure1}
\end{center}\end{table}
\normalsize
The first inequality in the system $(\ref{system1})$ \begin{equation}\label{boundx2}x_2\geq 2x_1^2-1,\end{equation} returns the upper bound:
\begin{equation}\label{expB2}B_2(X)\leq g\sqrt{q}(x_1-\sqrt{q}(2x_1^2-1))+\frac{q^2-q}{2}.\end{equation}
Expliciting $x_1$, we get the following bound for $B_2(X)$ in function of $q$,$g$ and $\sharp X(\mathbb F_q)$, which is a reformulation of Proposition 14 of \cite{H-P}:

\begin{proposition}\label{boundb2}
Let $X$ be a smooth curve of genus $g>0$ over $\mathbb F_q$. We have:
$$B_2(X)\leq \frac{q^2+1 +2gq-\frac{1}{g}\left(\sharp X(\mathbb F_q)- (q+1)\right)^2-\sharp X(\mathbb F_q)}{2}.$$
\end{proposition}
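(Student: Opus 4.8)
The plan is to substitute the explicit expressions for $x_1$ and $x_2$ into the inequality $(\ref{expB2})$ and simply solve for $\sharp X(\mathbb F_q)$. Concretely, from $\sharp X(\mathbb F_q) = q+1-2g\sqrt q\, x_1$ we have $x_1 = \frac{(q+1)-\sharp X(\mathbb F_q)}{2g\sqrt q}$. First I would rewrite $(\ref{expB2})$ as
\begin{equation*}
B_2(X) \leq g\sqrt q\, x_1 - 2gq\, x_1^2 + gq + \frac{q^2-q}{2},
\end{equation*}
which groups the two terms coming from $x_1-\sqrt q(2x_1^2-1)$ together with the constant $\frac{q^2-q}{2}$ from $(\ref{b2})$.

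Next I would carry out the substitution term by term. The linear term gives $g\sqrt q\, x_1 = \frac{(q+1)-\sharp X(\mathbb F_q)}{2}$. The quadratic term gives $2gq\, x_1^2 = 2gq\cdot\frac{\big((q+1)-\sharp X(\mathbb F_q)\big)^2}{4g^2 q} = \frac{\big(\sharp X(\mathbb F_q)-(q+1)\big)^2}{2g}$. Assembling these,
\begin{equation*}
B_2(X) \leq \frac{(q+1)-\sharp X(\mathbb F_q)}{2} - \frac{\big(\sharp X(\mathbb F_q)-(q+1)\big)^2}{2g} + gq + \frac{q^2-q}{2},
\end{equation*}
and then I would put everything over the common denominator $2$, collecting $\frac{q+1}{2} + gq + \frac{q^2-q}{2} = \frac{q^2+1+2gq}{2}$, which yields exactly the claimed bound
\begin{equation*}
B_2(X)\leq \frac{q^2+1 +2gq-\frac{1}{g}\left(\sharp X(\mathbb F_q)- (q+1)\right)^2-\sharp X(\mathbb F_q)}{2}.
\end{equation*}

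The only nontrivial input is the inequality $x_2 \geq 2x_1^2-1$ from $(\ref{boundx2})$, which is itself the $n=2$ case of the Hallouin--Perret constraint $(\ref{system1})$ (equivalently, the $2\times 2$ principal minor $1-x_1^2\geq 0$ together with the $3\times 3$ minor of $G_2$, which reduces to $(1-x_2)(1+x_2-2x_1^2)\geq 0$ and hence, using $x_2\leq 1$, to $x_2\geq 2x_1^2-1$); this is already established in the excerpt, so I would simply invoke it. Thus there is really no obstacle here: the statement is a pure algebraic reformulation, and the ``hard part'' — establishing that the associated pair $(x_1,x_2)$ lies in $\mathcal C_2\cap\mathcal W_2\cap\mathcal H_2^{q,g}$ — has been done in Section \ref{recall}. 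The one point requiring a little care is bookkeeping the signs in the quadratic substitution (since $(q+1)-\sharp X(\mathbb F_q)$ may be negative, but it appears squared, so this is harmless) and confirming that the three constant terms combine correctly into $q^2+1+2gq$.
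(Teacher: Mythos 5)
Your proposal is correct and follows exactly the paper's (implicit) argument: the paper obtains the bound by substituting $x_1=\frac{(q+1)-\sharp X(\mathbb F_q)}{2g\sqrt q}$ into inequality $(\ref{expB2})$, which is itself $(\ref{b2})$ combined with the constraint $x_2\geq 2x_1^2-1$ from the positive semidefiniteness of $G_2$. Your algebra checks out, including the combination of constants into $\frac{q^2+1+2gq}{2}$.
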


Now let us  suppose that $X$ is an optimal smooth curve of genus $g>0$, that is $X$ has $N_q(g)$ rational points. By Proposition $\ref{boundb2}$, if we set
$$M''(q,g):= \frac{q^2+1 +2gq-\frac{1}{g}\left(N_q(g)- (q+1)\right)^2-N_q(g)}{2},$$
then we have:
$$B_2(\mathcal X_q(g))\leq M''(q,g).$$
The quantity $M''(q,g)$ can hence be seen as a second-order upper bound for $B_2(\mathcal X_q(g))$.

We obtain the following proposition, as an easy consequence of $(\ref{iff})$:
\begin{proposition}\label{prop1} Let $g>0$.
If $\pi> g+ M''(q,g)$, then there do not exist $\delta$-optimal curves  defined over $\mathbb F_q$ of geometric genus $g$ and arithmetic genus $\pi$.\end{proposition}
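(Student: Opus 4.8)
The plan is to derive Proposition~\ref{prop1} directly from the equivalence $(\ref{iff})$ together with the second-order bound $B_2(\mathcal X_q(g))\leq M''(q,g)$ established just above. Recall that $(\ref{iff})$ states
$$N_q(g,\pi)= N_q(g)+\pi-g \iff g\leq \pi \leq g+B_2(\mathcal X_q(g)),$$
and that a $\delta$-optimal curve of geometric genus $g$ and arithmetic genus $\pi$ is by definition a curve with $N_q(g)+\pi-g$ rational points, which exists if and only if $N_q(g,\pi)=N_q(g)+\pi-g$.

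First I would rephrase the claim contrapositively: to show that $\pi>g+M''(q,g)$ forces the non-existence of $\delta$-optimal curves of genus pair $(g,\pi)$, it suffices to show that such an existence would contradict $\pi>g+M''(q,g)$. So assume a $\delta$-optimal curve of geometric genus $g>0$ and arithmetic genus $\pi$ exists. Then $N_q(g,\pi)=N_q(g)+\pi-g$, so by the forward direction of $(\ref{iff})$ we get $g\leq \pi\leq g+B_2(\mathcal X_q(g))$. Combining this with the second-order inequality $B_2(\mathcal X_q(g))\leq M''(q,g)$ yields $\pi\leq g+M''(q,g)$, contradicting the hypothesis $\pi>g+M''(q,g)$. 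Hence no such curve exists.

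There is essentially no obstacle here: the statement is a formal corollary, and the only things to be careful about are (i) invoking $(\ref{iff})$ only in the direction actually needed (existence $\Rightarrow$ the inequality on $\pi$), and (ii) noting that the hypothesis $g>0$ is exactly what is required for the chain $B_2(\mathcal X_q(g))\leq M''(q,g)$ to be valid, since Proposition~\ref{boundb2} and the definition of $M''(q,g)$ both require $g>0$. One might also briefly remark that when $\pi<g$ the statement is vacuous (no curve has arithmetic genus smaller than its geometric genus, since $\pi-g\geq 0$), so the content is genuinely about $\pi$ large. I would keep the write-up to two or three sentences, citing $(\ref{iff})$ and the displayed bound $B_2(\mathcal X_q(g))\leq M''(q,g)$ as the two inputs.
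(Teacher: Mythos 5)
Your argument is correct and is exactly the paper's route: the paper presents Proposition~\ref{prop1} as ``an easy consequence of $(\ref{iff})$'' combined with the just-established bound $B_2(\mathcal X_q(g))\leq M''(q,g)$, which is precisely the two-input contrapositive you spell out. Nothing is missing.
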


In the following table we have used the quantity $M''(q,g)$ to get  upper bounds for $B_2(\mathcal X_q(g))$ for specific couples $(q,g$) (we have used  the datas about $N_q(g)$ which are available at \emph{http://www.manypoints.org/}).
\begin{center}
\begin{table}[!h]
\begin{tabular}{|l|>{\centering\arraybackslash}p{1cm}|>{\centering\arraybackslash}p{1cm}|>{\centering\arraybackslash}p{1cm}|>{\centering\arraybackslash}p{1cm}|>{\centering\arraybackslash}p{1cm}|}
\hline
\backslashbox{q}{g}
&$2$&$3$&$4$&$5$&$6$\\\hline
$2$&$1$&$2$&$3$&$4$&5\\\hline
$3$&$3$&$3$&$3$&$5$&$7$\\\hline
$2^2$&$5$&$0$&$4$&$5$&$3$\\\hline
\end{tabular}
\bigskip
\caption{\footnotesize Second-order upper bounds for  $B_2(\mathcal X_q(g))$ given by $M''(q,g)$.}
\label{T2}
\end{table}
\end{center}

\subsubsection{The third order} If now we increase the dimension to $n=3$, new constraints in $x_1,x_2 ,x_3$ are added to those of the system $(\ref{system1})$. Indeed, the set $\mathcal C_3 \cap \mathcal W_3 \cap \mathcal H_3^{q,g}$ is given by the triplets  $(x_1,x_2,x_3)\in \mathbb R^3$ which satisfy the following system of inequalities:

$$\left\{\begin{array}{l}
 2x_1^2-1\leq x_2\leq 1\\
-1+\frac{(x_1+x_2)^2}{1+x_1}\leq x_3 \leq 1-\frac{(x_1-x_2)^2}{1-x_1}\\
1+2x_1x_2x_3-x_3^2-x_1^2-x_2^2\geq 0\\
x_2\leq \frac{x_1}{\sqrt{q}}+\frac{q-1}{2g}\\
x_3\leq  \frac{x_1}{q}+\frac{q^2-1}{2g\sqrt q}.
\end{array} \right.$$

\bigskip
Let us consider the projection of $\mathcal C_3 \cap \mathcal W_3 \cap \mathcal H_3^{q,g}$ on the plane $<x_1, x_2>$, that is the set $\{(x_1, x_2)\in \mathbb R^2 : (x_1,x_2,x_3)\in \mathcal C_3 \cap \mathcal W_3 \cap \mathcal H_3^{q,g} \}$. It is easy to show that this set is given by the couples $(x_1,x_2)\in \mathbb R^2$ which satisfy:
\begin{equation}\label{system3}\left\{\begin{array}{l}
 2x_1^2-1\leq x_2\leq 1\\
-1+\frac{(x_1+x_2)^2}{1+x_1}\leq  \frac{x_1}{q}+\frac{q^2-1}{2g\sqrt q}\\
x_2\leq \frac{x_1}{\sqrt{q}}+\frac{q-1}{2g}.\\
\end{array} \right.\end{equation}

The   equation which corresponds to the second inequality in the system $(\ref{system3})$ is given by:
\begin{equation}\label{eq2}x_2^2+2x_1x_2-\left(\frac{1}{q}-1\right)x_1^2-\left(\frac{1}{q}+1+\frac{q^2-1}{2g\sqrt{q}}\right)x_1-1-\frac{q^2-1}{2g\sqrt{q}}= 0.\end{equation}
In the plane    $<x_1,x_2>$, the equation  $(\ref{eq2})$ represents an hyperbola $ H^{q,g}$  that passes through the point $(-1,1)$. For $g\geq g_3=\frac{\sqrt{q}(q-1)}{\sqrt{2}},$ the hyperbola $ H^{q,g}$ intersects the parabola in more than two  points. Hence we can have the following two configurations for the region of the plane which corresponds to the system $(\ref{system3})$:

\begin{table}[H]
\begin{center}
\begin{tabular}{lr}
\includegraphics[height=6cm]{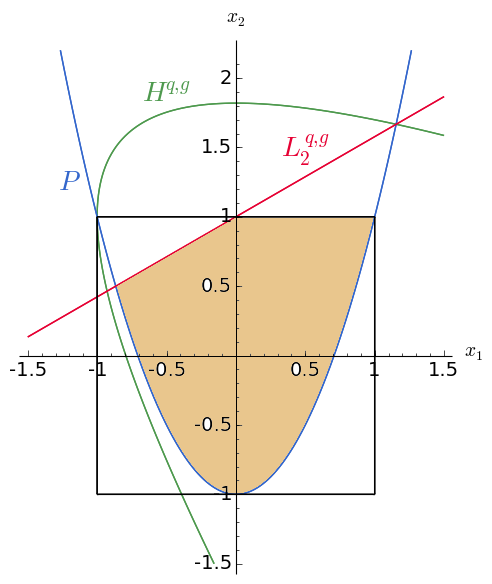} &\includegraphics[height=6
cm]{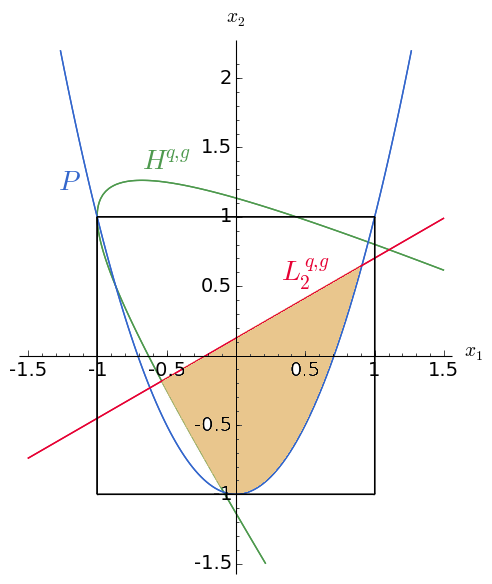}   \end{tabular}
\centering{
\caption{\footnotesize The  projection of $\mathcal C_3\cap\mathcal W_3 \cap \mathcal H_3^{q,g}$ on the plane $<x_1, x_2>$ respectively for $g<g_3$ and $ g>g_3$.}}
\end{center}
\label{figure2}
\end{table}

We remark that for $g\geq g_3$ we have a better  lower bound for $x_2$ in function of $x_1$ (compared to the bound  $(\ref{boundx2})$), which is given by the smallest solution  of the quadratic equation $(\ref{eq2})$ in $x_2$:

$$x_2\geq -x_1-\sqrt{\frac{1}{q}x_1^2+\left(\frac{1}{q}+1+\frac{q^2-1}{2g\sqrt{q}}\right)x_1+1+\frac{q^2-1}{2g\sqrt{q}}}$$
Thus, by $(\ref{b2})$, we get a new upper bound for $B_2(X)$, in function of $q$,$g$ and $x_1$:
  \begin{equation}\label{boundb2-2}\tiny B_2(X)\leq g\sqrt{q}\left((1+\sqrt{q})x_1+\sqrt{q}\sqrt{
\frac{1}{q}x_1^2+\left(\frac{1}{q}+1+\frac{q^2-1}{2g\sqrt{q}}\right)x_1+1+\frac{q^2-1}{2g\sqrt{q}}}\right)+\frac{q^2-q}{2}.
\end{equation}

Expliciting $x_1$ in $(\ref{boundb2-2})$, we get a new upper bound for $B_2(X)$ in function of $q$,$g$ and $\sharp X(\mathbb F_q)$:

\begin{proposition}\label{boundb2bis}
Let $X$ be a smooth curve of genus $g\geq\frac{\sqrt{q}(q-1)}{\sqrt{2}}$ over $\mathbb F_q$. We have:
\tiny
$$B_2(X)\leq \sqrt{1/4\left(\sharp X(\mathbb F_q)\right)^2+\alpha(q,g)\sharp X(\mathbb F_q)+\beta(q,g)}-\frac{(1+\sqrt{q})}{2}\sharp X(\mathbb F_q)+\frac{q^2+1+\sqrt{q}(q+1)}{2},$$
\normalsize where
$$\left\{
\begin{array}{l}
\alpha(q,g)=-\frac{1}{4}((2q\sqrt{q}+2\sqrt{q})g+q^3+q+2)\\
\beta(q,g)=\frac{1}{4}(4q^2g^2+2\sqrt{q}(q^3+q^2+q+1)g+q^4+q^3+q+1).\\
\end{array}\right.$$

\end{proposition}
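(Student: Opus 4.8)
The plan is purely computational: the inequality (\ref{boundb2-2}) for $B_2(X)$ is already established in the paragraph preceding the statement, under the hypothesis $g\geq g_3=\frac{\sqrt q(q-1)}{\sqrt 2}$, and all that remains is to re-express its right-hand side in terms of $\sharp X(\mathbb F_q)$ rather than $x_1$. For this I would use the identity $\sharp X(\mathbb F_q)=q+1-2g\sqrt q\,x_1$ recorded just above (\ref{b2}), i.e.
\[
x_1=\frac{(q+1)-\sharp X(\mathbb F_q)}{2g\sqrt q},
\]
which is legitimate since $g>0$.

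Write $N:=\sharp X(\mathbb F_q)$. First I would treat the non-radical summand of (\ref{boundb2-2}): since $g\sqrt q(1+\sqrt q)x_1=\frac{1+\sqrt q}{2}\bigl((q+1)-N\bigr)$, this is affine in $N$ and contributes the term $-\frac{1+\sqrt q}{2}N$ together with a constant $\frac{1+\sqrt q}{2}(q+1)$. Next I would handle the radical term $g\sqrt q\cdot\sqrt q\sqrt{\,\cdot\,}=gq\sqrt{\,\cdot\,}$: because the radicand is nonnegative on the range $g\geq g_3$ (it is, up to the positive factor $\frac14$, the discriminant of the quadratic (\ref{eq2}) in $x_2$, whose smaller real root produced (\ref{boundb2-2})) and $g^2q^2>0$, one may rewrite this as $\sqrt{\,g^2q^2\bigl(\,\cdot\,\bigr)}$. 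Substituting $x_1=\frac{(q+1)-N}{2g\sqrt q}$ and $x_1^2=\frac{((q+1)-N)^2}{4g^2q^2}$ into
\[
g^2q^2\left(\frac{1}{q}x_1^2+\left(\frac{1}{q}+1+\frac{q^2-1}{2g\sqrt q}\right)x_1+1+\frac{q^2-1}{2g\sqrt q}\right)
\]
and expanding, I expect to obtain a quadratic polynomial in $N$ whose leading coefficient collapses to $\frac14$, whose linear coefficient is $\alpha(q,g)$ and whose constant term is $\beta(q,g)$; organizing the computation by powers of $g$ (the $g^2$-part giving $q^2g^2$, the $g$-part giving $\frac{\sqrt q(q+1)(q^2+1)}{2}\,g$, the $g$-free part giving $\frac{(q+1)(q^3+1)}{4}$) keeps it manageable. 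Finally, collecting the leftover constant $\frac{q^2-q}{2}$ from (\ref{boundb2-2}) with $\frac{1+\sqrt q}{2}(q+1)$ yields $\frac{q^2+1+\sqrt q(q+1)}{2}$, which is exactly the constant appearing in the statement.

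There is no conceptual difficulty here; the only real obstacle is the bookkeeping in the expansion above, in particular keeping track of the half-integer powers of $q$ so that $\alpha(q,g)$ and $\beta(q,g)$ emerge in precisely the claimed polynomial form, and checking that the radicand in (\ref{boundb2-2}) is indeed nonnegative for $g\geq g_3$ so that absorbing $g^2q^2$ under the square root is valid.
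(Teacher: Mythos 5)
Your proposal is correct and follows exactly the route the paper takes: the paper's own justification is nothing more than ``expliciting $x_1$'' in inequality (\ref{boundb2-2}) via $x_1=\frac{(q+1)-\sharp X(\mathbb F_q)}{2g\sqrt q}$, absorbing the factor $gq$ into the radical, and collecting powers of $N=\sharp X(\mathbb F_q)$. Your intermediate values (leading coefficient $\tfrac14$, the $g$-graded pieces $q^2g^2$, $\tfrac{\sqrt q(q+1)(q^2+1)}{2}g$, $\tfrac{(q+1)(q^3+1)}{4}$ of the constant term, and the constant $\tfrac{q^2+1+\sqrt q(q+1)}{2}$) all check out against $\alpha(q,g)$ and $\beta(q,g)$.
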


As before, if we set
\tiny
$$M'''(q,g):=\sqrt{1/4\left(N_q(g)\right)^2+\alpha(q,g)N_q(g))+\beta(q,g)}-\frac{(1+\sqrt{q})}{2}N_q(g))+\frac{q^2+1+\sqrt{q}(q+1)}{2},$$
\normalsize
where $\alpha(q,g)$ and $\beta(q,g)$ are defined as in Proposition $\ref{boundb2bis}$, we have
$$B_2(\mathcal X_q(g))\leq M'''(q,g).$$
By $(\ref{iff})$, we get the following proposition:
\begin{proposition}\label{prop2}Let us assume that $g\geq \frac{\sqrt{q}(q-1)}{\sqrt{2}}$.
If $\pi> g+ M'''_q(g)$, then there do not exist $\delta$-optimal curves defined over $\mathbb F_q$  of geometric genus $g$ and arithmetic genus $\pi$.\end{proposition}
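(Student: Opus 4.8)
The plan is to derive Proposition~\ref{prop2} in exactly the same way Proposition~\ref{prop1} was derived from $M''(q,g)$, replacing the second-order bound by the third-order bound of Proposition~\ref{boundb2bis}. The two ingredients are both already at our disposal: the characterization~(\ref{iff}) of the pairs $(g,\pi)$ for which $N_q(g,\pi)=N_q(g)+\pi-g$ (Theorem 5.3 of \cite{A-I}), and the inequality $B_2(\mathcal X_q(g))\le M'''(q,g)$ recorded just above the statement.

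First I would argue by contraposition. Suppose that for some $\pi$ with $\pi>g+M'''(q,g)$ there exists a $\delta$-optimal curve $X$ defined over $\mathbb F_q$ of geometric genus $g$ and arithmetic genus $\pi$, i.e. a curve with $\sharp X(\mathbb F_q)=N_q(g)+\pi-g$. Then $N_q(g,\pi)\ge N_q(g)+\pi-g$, and since the reverse inequality $N_q(g,\pi)\le N_q(g)+\pi-g$ always holds (recalled in the introduction, from \cite{A-I}), we obtain the equality $N_q(g,\pi)=N_q(g)+\pi-g$. By the equivalence~(\ref{iff}) this forces $g\le\pi\le g+B_2(\mathcal X_q(g))$.

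Next I would bring in the third-order bound. Since we are assuming $g\ge g_3=\frac{\sqrt q(q-1)}{\sqrt 2}$, Proposition~\ref{boundb2bis} applies to every smooth curve of genus $g$ over $\mathbb F_q$; evaluating its right-hand side, which for fixed $q$ and $g$ depends only on $\sharp X(\mathbb F_q)$, on an optimal smooth curve (for which $\sharp X(\mathbb F_q)=N_q(g)$) gives $B_2(X)\le M'''(q,g)$ for every $X\in\mathcal X_q(g)$, hence $B_2(\mathcal X_q(g))\le M'''(q,g)$. Combined with the previous chain of inequalities this yields $\pi\le g+M'''(q,g)$, contradicting the hypothesis $\pi>g+M'''(q,g)$, and the proposition follows.

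I do not expect a genuine obstacle here: all the analytic content sits inside Proposition~\ref{boundb2bis} (the improved lower bound for $x_2$ coming from the $n=3$ region $\mathcal C_3\cap\mathcal W_3\cap\mathcal H_3^{q,g}$) and inside the equivalence~(\ref{iff}), both of which may be invoked. The only point worth a word of care is the passage from $B_2(X)$ to $B_2(\mathcal X_q(g))$: the set $\mathcal X_q(g)$ is non-empty by definition of $N_q(g)$, and all its members have exactly $N_q(g)$ rational points, so the bound of Proposition~\ref{boundb2bis} specializes to the single value $M'''(q,g)$ on the whole of $\mathcal X_q(g)$, making that passage immediate.
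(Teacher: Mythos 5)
Your proposal is correct and follows exactly the paper's route: the paper establishes $B_2(\mathcal X_q(g))\le M'''(q,g)$ from Proposition~\ref{boundb2bis} and then deduces the statement directly from the equivalence~(\ref{iff}), which is precisely your contrapositive argument spelled out in more detail. No discrepancy to report.
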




In the following table, using the quantity $M'''(q,g)$, we give upper bounds for $B_2(\mathcal X_q(g))$.  As $M'''(q,g)$ makes sense if and only if $g\geq \frac{\sqrt{q}(q-1)}{\sqrt{2}}$, some boxes of the table have been left empty.
\begin{center}
\begin{table}[!h]
\begin{tabular}{|l|>{\centering\arraybackslash}p{1cm}|>{\centering\arraybackslash}p{1cm}|>{\centering\arraybackslash}p{1cm}|>{\centering\arraybackslash}p{1cm}|>{\centering\arraybackslash}p{1cm}|}
\hline
\backslashbox{q}{g}
&$2$&$3$&$4$&$5$&$6$\\\hline
$2$&$0$&$0$&$1$&$1$&$1$\\\hline
$3$&&$2$&$1$&$2$&$3$\\\hline
$2^2$&&&&$4$&$1$\\\hline
\end{tabular}
\bigskip
\caption{\footnotesize Third-order upper bounds for  $B_2(\mathcal X_q(g))$  given by $M'''(q,g)$.}\label{T3}
\end{table}
\end{center}

Using Proposition $\ref{boundb2}$ and Proposition $\ref{boundb2bis}$, we can sum up Table $\ref{T2}$ and $\ref{T3}$ in the following one:

\begin{center}
\begin{table}[!h]
\begin{tabular}{|l|>{\centering\arraybackslash}p{1cm}|>{\centering\arraybackslash}p{1cm}|>{\centering\arraybackslash}p{1cm}|>{\centering\arraybackslash}p{1cm}|>{\centering\arraybackslash}p{1cm}|}
\hline
\backslashbox{q}{g}
&$2$&$3$&$4$&$5$&$6$\\\hline
$2$&$0$&$0$&$1$&$1$&$1$\\\hline
$3$&$3$&$2$&$1$&$2$&$3$\\\hline
$2^2$&$5$&$0$&$4$&$4$&$1$\\\hline
\end{tabular}
\bigskip
\caption{\footnotesize Upper bounds for  $B_2(\mathcal X_q(g))$.}
\end{table}
\end{center}

\subsection{Lower bound for $B_2(X)$}
In a similar way we can look for lower bounds for $B_2(X)$. From the Weil bounds related to $(\ref{weil})$, we have  $ \sharp X(\mathbb F_{q^2})\geq q^2+1-2gq$ and $ \sharp X(\mathbb F_{q}) \leq q+1+2g\sqrt{q}$ so that
\begin{equation}\label{lower}B_2(X)\geq \frac{q^2-q}{2}-g(q+\sqrt{q}).\end{equation}
It is easy to show that the quantity on the right-hand side of $(\ref{lower})$ is positive if and only if $g<g_2=\frac{\sqrt{q}(\sqrt{q}-1)}{2}$.

We can consider inequality $(\ref{lower})$ as a lower bound for $B_2(X)$ at the first order, as it is a direct consequence of the Weil bounds. Geometrically it is also clear that we will not obtain better lower bounds  at the second or  at the third order. Indeed, looking at the graphics in Table $2$ and Table $4$, we remark that, in some cases and for some values of $x_1$, a better upper bound for $x_2$ is  given by the line $L_2^{q,g}$. But we have seen in Remark $\ref{remark1}$ that if the pair $(x_1,x_2)$ is on the line $L_2^{q,g}$, then $\sharp X(\mathbb F_q)=\sharp X(\mathbb F_{q^2})$, which means $B_2(X)=0$.

\bigskip
For $g<g_2$, the inequality $(\ref{lower})$ implies the following lower bounds for  $B_2(\mathcal X_q(g))$:
\begin{center}
\begin{table}[!h]
\begin{tabular}{|l|>{\centering\arraybackslash}p{1cm}|>{\centering\arraybackslash}p{1cm}|>{\centering\arraybackslash}p{1cm}|>{\centering\arraybackslash}p{1cm}|}
\hline
\backslashbox{q}{g}
&$2$&$3$&$4$&$5$\\\hline
$7$&$2$&&&\\\hline
$2^3$&$7$&&&\\\hline
$3^2$&$12$&&&\\\hline
$11$&$27$&$13$&&\\\hline
$13$&$45$&$29$&$12$&\\\hline
$2^4$&$80$&$60$&$40$&$20$\\\hline
\end{tabular}
\bigskip
\caption{\footnotesize Lower bounds for  $B_2(\mathcal X_q(g))$.}
\end{table}
\end{center}

Hence we get  from the equivalence $(\ref{iff})$ and the inequality (\ref{lower}) the following proposition:
\begin{proposition}\label{prop3}
Let $g<\frac{\sqrt{q}(\sqrt{q}-1)}{2}$.
 If $g\leq \pi \leq g+ \frac{q^2-q}{2}-g(q+\sqrt{q})$, then there exists a $\delta$-optimal curve  defined over $\mathbb F_q$ of geometric genus $g$ and arithmetic genus $\pi$.\end{proposition}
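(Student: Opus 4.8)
The plan is to deduce the statement directly from the equivalence $(\ref{iff})$ together with the first-order lower bound $(\ref{lower})$ applied to an optimal smooth curve, so almost no new work is required.

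First I would observe that, by the inequality $N_q(g,\pi)\leq N_q(g)+\pi-g$ from \cite{A-I}, the existence of a $\delta$-optimal curve of geometric genus $g$ and arithmetic genus $\pi$ is \emph{equivalent} to the equality $N_q(g,\pi)=N_q(g)+\pi-g$: indeed, $N_q(g,\pi)$ is by definition attained by some curve of geometric genus $g$ and arithmetic genus $\pi$, so the equality holds precisely when such a curve has exactly $N_q(g)+\pi-g$ rational points. By $(\ref{iff})$, this equality is in turn equivalent to $g\leq\pi\leq g+B_2(\mathcal X_q(g))$. It therefore suffices to prove
$$B_2(\mathcal X_q(g))\geq\frac{q^2-q}{2}-g(q+\sqrt q).$$

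To establish this, I would fix, by the very definition of $N_q(g)$, an optimal smooth curve $X_0$ of genus $g$ over $\mathbb F_q$, so that $X_0\in\mathcal X_q(g)$ and hence $B_2(\mathcal X_q(g))\geq B_2(X_0)$. Applying the first-order lower bound $(\ref{lower})$ — which comes from the Weil estimates $\sharp X_0(\mathbb F_{q^2})\geq q^2+1-2gq$ and $\sharp X_0(\mathbb F_q)\leq q+1+2g\sqrt q$ and is valid for every $g\geq 0$ — to the curve $X_0$ gives $B_2(X_0)\geq\frac{q^2-q}{2}-g(q+\sqrt q)$, which yields the displayed inequality. Since by hypothesis $g\leq\pi\leq g+\frac{q^2-q}{2}-g(q+\sqrt q)\leq g+B_2(\mathcal X_q(g))$, the equivalence $(\ref{iff})$ produces a curve of geometric genus $g$ and arithmetic genus $\pi$ with $N_q(g)+\pi-g$ rational points, i.e. a $\delta$-optimal curve, as desired. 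The hypothesis $g<g_2=\frac{\sqrt q(\sqrt q-1)}{2}$ intervenes only to guarantee that $\frac{q^2-q}{2}-g(q+\sqrt q)>0$, so that the prescribed range for $\pi$ is genuinely wider than the single value $\pi=g$ corresponding to the smooth case.

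There is no serious technical obstacle in this argument; the only points deserving attention are the logical translation ``existence of a $\delta$-optimal curve $\iff$ $N_q(g,\pi)=N_q(g)+\pi-g$ $\iff$ $g\leq\pi\leq g+B_2(\mathcal X_q(g))$'', and the elementary verification that $(\ref{lower})$ may legitimately be applied to $X_0$ across the whole range $0\leq g<g_2$, including the boundary case $g=0$ where $X_0=\mathbb P^1$ and both sides of $(\ref{lower})$ equal $\tfrac{q^2-q}{2}$.
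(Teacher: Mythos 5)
Your proposal is correct and follows exactly the route the paper takes: the paper derives Proposition \ref{prop3} precisely by combining the equivalence $(\ref{iff})$ with the first-order Weil lower bound $(\ref{lower})$ applied to an optimal smooth curve, so that $B_2(\mathcal X_q(g))\geq \frac{q^2-q}{2}-g(q+\sqrt q)$. Your write-up merely makes explicit the logical translation and the role of the hypothesis $g<g_2$, both of which the paper leaves implicit.
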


\subsection{Some exact values for $N_q(g,\pi)$}

We can use the previous results to provide some exact values of $N_q(g,\pi)$ for specific triplets $(q,g,\pi)$.

\begin{proposition}\label{Nqgpi}
Let $q$ be a power of a prime number $p$. We have:
\begin{enumerate}
\item   $N_q(0,\pi)=q+1+\pi$ if and only if $0\leq \pi\leq \frac{q^2-q}{2}$ .
\item If $p$ does not divide $[2\sqrt{q}]$, or $q$ is a square, or $q=p$,  then $N_q(1,\pi)= q+[2\sqrt{q}]+\pi$  if and only if  $1 \leq \pi \leq 1+\frac{q^2+q-[2\sqrt{q}]([2\sqrt{q}]+1)}{2}$.

\noindent Otherwise,  $N_q(1,\pi)= q+[2\sqrt{q}]+\pi-1$   if and only if
$1 \leq \pi \leq 1+\frac{q^2+q+[2\sqrt{q}](1-[2\sqrt{q}])}{2}$.
\item  If  $g<\frac{\sqrt{q}(\sqrt{q}-1)}{2}$ and $g\leq \pi \leq \frac{q^2-q}{2}-g(q+\sqrt{q}-1)$ then $ N_q(g,\pi)= N_q(g)+\pi-g.$
\item $N_2(2,3)=6$.
\item $N_2(3,4)=7$.
\item $N_{2^2}(4,5)=14.$
\end{enumerate}
\end{proposition}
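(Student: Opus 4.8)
The plan is to deduce all six items from the equivalence (\ref{iff}) --- $N_q(g,\pi)=N_q(g)+\pi-g$ iff $g\le\pi\le g+B_2(\mathcal X_q(g))$ --- together with the sandwich $N_q(g)\le N_q(g,\pi)\le N_q(g)+\pi-g$ recalled in the introduction. So for $g\in\{0,1\}$ I would compute $B_2(\mathcal X_q(g))$ exactly and read off items (1) and (2); I would obtain item (3) directly from Proposition \ref{prop3}; and for the numerical cases (4)--(6), where $\pi=g+1$, I would show that the relevant $B_2(\mathcal X_q(g))$ vanishes, so that $N_q(g,g+1)\ne N_q(g)+1$ and hence, lying between $N_q(g)$ and $N_q(g)+1$, equals $N_q(g)$.

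For item (1), every smooth genus-$0$ curve over $\mathbb F_q$ has a rational point (a conic over a finite field always does), hence is $\mathbb P^1$; since $\sharp\mathbb P^1(\mathbb F_{q^2})=q^2+1$ and $\sharp\mathbb P^1(\mathbb F_q)=q+1$, one gets $B_2(\mathcal X_q(0))=B_2(\mathbb P^1)=\tfrac{q^2-q}{2}$, and (\ref{iff}) with $g=0$, $N_q(0)=q+1$ gives exactly the stated range. For item (2), an optimal genus-$1$ curve is an elliptic curve $E$ with $\sharp E(\mathbb F_q)=N_q(1)$, so the Frobenius trace $a=q+1-N_q(1)$ is the same for every optimal $E$; by the classification of Frobenius traces of elliptic curves over $\mathbb F_q=\mathbb F_{p^n}$ (Deuring, Waterhouse), $a=-[2\sqrt q]$ is realized --- so that $N_q(1)=q+1+[2\sqrt q]$ --- unless $n\ge 3$ is odd and $p\mid[2\sqrt q]$, in which case the best admissible trace is $a=-([2\sqrt q]-1)$ and $N_q(1)=q+[2\sqrt q]$; the hypothesis ``$p\nmid[2\sqrt q]$, or $q$ square, or $q=p$'' is precisely the negation of this exceptional case. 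Since $\sharp E(\mathbb F_{q^2})=q^2+1-(a^2-2q)$, one has $B_2(E)=\tfrac{q^2+q-a(a-1)}{2}$; substituting the two possible values of $a$ gives $B_2(\mathcal X_q(1))=\tfrac{q^2+q-[2\sqrt q]([2\sqrt q]+1)}{2}$, resp.\ $\tfrac{q^2+q+[2\sqrt q](1-[2\sqrt q])}{2}$, and (\ref{iff}) with $g=1$ then produces the two stated ranges for $\pi$.

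Item (3) I would get straight from Proposition \ref{prop3}: under $g<g_2$ and $g\le\pi\le g+\tfrac{q^2-q}{2}-g(q+\sqrt q)$ that proposition provides a $\delta$-optimal curve, i.e.\ one with $N_q(g)+\pi-g$ rational points, whence $N_q(g,\pi)\ge N_q(g)+\pi-g$; combined with the upper bound $N_q(g,\pi)\le N_q(g)+\pi-g$ this forces equality, after noting $g+\tfrac{q^2-q}{2}-g(q+\sqrt q)=\tfrac{q^2-q}{2}-g(q+\sqrt q-1)$. For items (4)--(6), $\pi=g+1$, so by (\ref{iff}) $N_q(g,g+1)=N_q(g)+1$ exactly when $B_2(\mathcal X_q(g))\ge 1$; if this fails then $N_q(g,g+1)\ne N_q(g)+1$ and hence equals $N_q(g)$. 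For $q=2$ and $g\in\{2,3\}$ the third-order estimate already gives $M'''(2,2)=M'''(2,3)=0$ (Proposition \ref{boundb2bis} and Table \ref{T3}), so $B_2(\mathcal X_2(2))=B_2(\mathcal X_2(3))=0$; with $N_2(2)=6$ and $N_2(3)=7$ this yields $N_2(2,3)=6$ and $N_2(3,4)=7$. Item (6) I would argue the same way, from $N_{2^2}(4)=14$ and $B_2(\mathcal X_{2^2}(4))=0$.

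The hard part will be exactly this last vanishing. For $q=2^2$ and $g=4$ the third-order bound is unavailable (here $g<g_3=\tfrac{\sqrt q(q-1)}{\sqrt 2}$), and the second-order bound only gives $B_2(\mathcal X_{2^2}(4))\le M''(2^2,4)$, which is positive, so $B_2(\mathcal X_{2^2}(4))=0$ does not come out of the generic estimates of this section. Establishing it amounts to proving that no smooth genus-$4$ curve over $\mathbb F_4$ with the maximal number of rational points carries a closed point of degree $2$ --- equivalently that such a curve has as many points over $\mathbb F_{16}$ as over $\mathbb F_4$ --- and I would expect this to require either a higher-order ($n\ge 4$) instance of the Euclidean method of Section \ref{recall} or a direct appeal to the known classification of optimal genus-$4$ curves over $\mathbb F_4$ together with an inspection of their zeta functions. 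A secondary but unavoidable point is that the whole scheme rests on the exact values $N_2(2)=6$, $N_2(3)=7$ and $N_{2^2}(4)=14$, which are taken as known input from the tables of record curves.
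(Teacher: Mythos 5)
Your treatment of items (1)--(5) is correct and is essentially the paper's own argument: the paper simply cites Corollary 5.4 and 5.5 of \cite{A-I} for (1) and (2) (your direct computation of $B_2(\mathcal X_q(0))=\frac{q^2-q}{2}$ and of $B_2(\mathcal X_q(1))$ via the Deuring--Waterhouse classification of traces is exactly the content of those corollaries), it invokes Proposition \ref{prop3} for (3), and for (4)--(5) it reads $B_2(\mathcal X_2(2))=B_2(\mathcal X_2(3))=0$ off Table 6 (whose $q=2$ entries come from the third-order bound $M'''$, as you say) and concludes via the sandwich $N_q(g)\le N_q(g,g+1)\le N_q(g)+1$ together with $(\ref{iff})$.

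For item (6) you have put your finger on a genuine defect, and it is in the paper, not in your proof. The paper disposes of (6) with ``Items (5) and (6) can be proven in the same way,'' but the same way requires a $0$ in Table 6 at the relevant entry, and the entry for $(q,g)=(2^2,4)$ is $4$ (only the second-order bound $M''(4,4)$ is available there, since $4<g_3=\frac{2\cdot 3}{\sqrt 2}$, exactly as you observe). So the argument does not close. In fact the statement as printed cannot be correct: $N_4(4)=15$ (manypoints), so $N_{2^2}(4,5)\ge N_{2^2}(4)=15>14$, contradicting the claim. Everything becomes consistent if (6) is read as $N_{2^2}(3,4)=14$: Table 6 has a $0$ at $(q,g)=(2^2,3)$ (coming from $M''(4,3)=0$ with $N_4(3)=14$), and then the argument of items (4)--(5) applies verbatim. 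You should not expect to prove (6) as stated, by this method or any other; the correct fix is to repair the statement, not to search for a higher-order bound forcing $B_2(\mathcal X_4(4))=0$.
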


\begin{proof}

Items {\it (1)}  and {\it(2)} are Corollary 5.4 and Corollary 5.5 in \cite{A-I}.
Item {\it (3)} is given by Proposition $\ref{prop3}$.

We have that $N_2(2,3)\geq N_2(2)=6$ and $B_2(\mathcal X_2(2))=0$, by Table 6. Hence {\it(4)} follows from Proposition $\ref{prop2}$ which says that $N_2(2,3)<N_2(2)+1$. Items {\it (5)} and {\it(6)} can be proven in the same way.

\end{proof}

\begin{remark}
Using the construction given in Section 3 of \cite{A-I}, we can easily show that $N_q(g,\pi+1)\geq N_q(g,\pi)$. This fact implies, for instance, that we have also $N_q\left(0,\frac{q^2-q}{2}+1\right)=q+1+ \frac{q^2-q}{2}$.
 \end{remark}

\section{Genera spectrum of maximal curves}\label{spectrum}


Let $X$ be a curve defined over $\mathbb F_q$ of geometric genus $g$ and arithmetic genus $\pi$. We recall that $X$ is a maximal curve if it attains  bound (\ref{APbound}), i.e
 $$\sharp{X}({\mathbb F}_q)= q+1+g[2\sqrt{q}]+ \pi-g.$$
 This definition recovers the classical definition of a smooth maximal curve.

An easy consequence of Proposition 5.2 in \cite{A-I} is that if $X$ is a maximal curve, then its normalization $\tilde{X}$ is a smooth maximal curve. Moreover,
the zeta function of a maximal curve $X$ is given by (see Prop. 5.8 in \cite{A-I}):
$$Z_{X}(T)=Z_{\tilde{X}}(T)(1+T)^{\pi-g}=\frac{(qT^2+[2\sqrt{q}]T+1)^g(1+T)^{\pi-g}}{(1-T)(1-qT)}.$$

We have seen in the previous section that, for $\pi$ quite big compared to $g$,  maximal curves of geometric genus $g$ and arithmetic genus $\pi$ don't exist.

 Hence, a related question  concerns the genera spectrum of maximal curves defined over $\mathbb F_q$, i.e. the set of couples $(g,\pi)$, with $g,\pi \in \mathbb N $ and $g\leq \pi$, for which  there exists a maximal curve over $\mathbb F_q $  of geometric genus  $g$ and arithmetic genus $\pi$:
\begin{align*}\Gamma_q:=&\{(g,\pi) \in \mathbb N\times \mathbb N\textrm{ : there exists a maximal curve defined over }\mathbb F_q    \\ &\textrm{ of geometric
genus   }g \textrm{ and arithmetic genus }\pi\}.\end{align*}

The analogous question in the smooth case has been  extensively  studied in the case where $q$ is a square. For $q$ square,  Ihara proved that if $X$ is a maximal smooth curve defined over $\mathbb F_q$ of genus $g$, then  $g\leq\frac{\sqrt{q}(\sqrt{q}-1)}{2}$  (see \cite{Ihara}) and R\" uck and Stichtenoth showed that $g$ attains this upper bound if and only if $X$ is $\mathbb F_{q}$-isomorphic to the Hermitian curve (see \cite{R-S}). Moreover, Fuhrmann and Garcia proved  that the genus $g$ of maximal smooth curves defined over $\mathbb F_q$ satisfies (see \cite{F-T})

\begin{equation}\label{gap1}\textrm{either} \qquad g\leq \left\lfloor\frac{(\sqrt{q}-1)^2}{4}\right\rfloor, \qquad \textrm{or} \qquad g=\frac{\sqrt{q}(\sqrt{q}-1)}{2}.\end{equation}

This fact corresponds to the so-called \emph{first gap} in the spectrum genera of $\mathbb F_q$-maximal smooth curves. For $q$ odd, Fuhrmann, Garcia and Torres showed that  $g=\frac{(\sqrt{q}-1)^2}{4}$ occurs if and only if $X$ is $\mathbb F_q$-isomorphic to the non-singular model of the plane curve of equation $y^{\sqrt{q}}+y=x^{\frac{\sqrt{q}+1}{2}}$ (see \cite{FGT}). For $q$ even, Abd\'on and Torres established a similar result  in \cite{A-T} under an extra-condition that $X$ has a particular Weierstrass point. In this case, $g=\frac{\sqrt{q}(\sqrt{q}-2)}{4}$ if and only if $X$ is $\mathbb F_q$-isomorphic to the non-singular model of the plane curve of equation $y^{\sqrt{q}/2}+\cdots +y^2+y=x^{(\sqrt{q}+1)}$.

Korchm\'aros and Torres improved $(\ref{gap1})$ in \cite{K-T}:
\begin{equation}\label{gap2} \textrm{either} \,g\leq  \left\lfloor\frac{q-\sqrt{q}+4}{6} \right\rfloor,  \, \textrm{or} \,\,g\leq  \left\lfloor\frac{(\sqrt{q}-1)^2}{4} \right\rfloor,\, \textrm{or}\,\,  g=\frac{\sqrt{q}(\sqrt{q}-1)}{2}.\end{equation}
Hence  the \emph{second gap} in the spectrum  genera of $\mathbb F_q$-maximal smooth curves is also known. In the same paper, non-singular $\mathbb F_q$-models of genus $ \left\lfloor\frac{q-\sqrt{q}+4}{6} \right\rfloor$ are provided.

\bigskip

Let us now consider  maximal curves with possibly singularities.
We assume $q$ square and we want to study the discrete set $\Gamma_q$.

Let $X$ be a maximal curve defined over $\mathbb F_q$ of geometric genus $g$ and arithmetic genus $\pi$. As remarked above, the normalization $\tilde{X}$ of $X$ is a maximal smooth curve, hence we have that $g$ satisfies $(\ref{gap2})$. Moreover,  $g$ and $\pi$ verify the following inequality:

\begin{proposition}\label{max} Let $q$ be a square. There exists a maximal curve defined over $\mathbb F_q$  of geometric genus $g$ and arithmetic genus $\pi $
 if and only if $ N_q(g)=q+1+2g\sqrt{q}$ and \begin{equation}\label{bound}g\leq\pi\leq g+ \frac{q^2+(2g-1)q-2g\sqrt{q}(2\sqrt{q}+1)}{2}.\end{equation}
\end{proposition}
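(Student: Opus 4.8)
The plan is to combine the equivalence (\ref{iff}) with a computation of $B_2(\mathcal X_q(g))$ in the special situation that forces a maximal curve to exist. First I would recall that, by the structure of maximal curves recalled just above, if $X$ is maximal of geometric genus $g$ and arithmetic genus $\pi$, then its normalization $\tilde X$ is a smooth maximal curve, so $\sharp \tilde X(\mathbb F_q) = q+1+2g\sqrt q$ and in particular $N_q(g) = q+1+2g\sqrt q$; this is the first necessary condition. Conversely, when $N_q(g) = q+1+2g\sqrt q$ the set $\mathcal X_q(g)$ consists precisely of the smooth curves with $q+1+2g\sqrt q$ rational points (the Weil-optimal ones). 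The key point is that for such a curve $x_1 = -1$, i.e. $\sharp \tilde X(\mathbb F_q)$ attains the first-order Weil bound, and then the maximality condition $\sharp X(\mathbb F_q) = q+1+2g\sqrt q + \pi - g$ is exactly the statement that $X$ is $\delta$-optimal with $N_q(g) = q+1+2g\sqrt q$. So, by (\ref{iff}), a maximal curve of geometric genus $g$ and arithmetic genus $\pi$ exists if and only if $N_q(g) = q+1+2g\sqrt q$ and $g \le \pi \le g + B_2(\mathcal X_q(g))$.

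Next I would compute $B_2(\mathcal X_q(g))$ under the hypothesis $N_q(g) = q+1+2g\sqrt q$. For a smooth curve $X$ with $x_1 = -1$ attaining the Weil bound, the Gram-matrix constraint (the positive-semidefiniteness encoded in $\mathcal W_n$) becomes rigid: with $x_1=-1$ the $2\times 2$ principal minor $1-x_1^2 = 0$ forces, via the parabola inequality $x_2 \ge 2x_1^2 - 1 = 1$ together with $x_2 \le 1$, that $x_2 = 1$ is the only value compatible with the Gram condition — but wait, this would give $B_2 = 0$, which is wrong, so instead one must use the arithmetic line: actually the correct reading is that $x_2$ is only bounded below by the parabola value at $x_1 = -1$, which is $2(-1)^2 - 1 = 1$, hence $x_2 \ge 1$, hence $x_2 = 1$... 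This apparent contradiction must be resolved by recalling that $N_q(g) = q+1+2g\sqrt q$ with $g \le g_2$ allows $x_2 < 1$ only through the \emph{upper} bound on $B_2$ coming from (\ref{expB2}) not being tight. Concretely, I would instead directly maximize $B_2(X) = g\sqrt q(x_1 - \sqrt q\, x_2) + \tfrac{q^2-q}{2}$ over the feasible region with $x_1 = -1$ fixed (since $X \in \mathcal X_q(g)$ forces $\sharp X(\mathbb F_q) = q+1+2g\sqrt q$): the minimum of $x_2$ subject to $x_2 \ge 2x_1^2 - 1$, $x_2 \le \tfrac{x_1}{\sqrt q} + \tfrac{q-1}{2g}$, $-1 \le x_2 \le 1$, evaluated at $x_1 = -1$. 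The relevant active constraint for $g \le g_2$ is the line $L_2^{q,g}$, giving $x_2 = -\tfrac{1}{\sqrt q} + \tfrac{q-1}{2g}$, but on that line $\sharp X(\mathbb F_q) = \sharp X(\mathbb F_{q^2})$ forces $B_2 = 0$; so for the genuine maximum one must take $x_2$ as small as the \emph{parabola} permits combined with integrality of $\sharp X(\mathbb F_{q^2})$. I would plug $x_1 = -1$, $x_2 = 2x_1^2 - 1 = 1$ — no: rather, I would substitute $\sharp X(\mathbb F_q) = q+1+2g\sqrt q$ directly into Proposition \ref{boundb2}, which gives
$$
B_2(X) \le \frac{q^2+1+2gq - \frac{1}{g}(2g\sqrt q)^2 - (q+1+2g\sqrt q)}{2} = \frac{q^2 + 2gq - 4g^2 q - q - 2g\sqrt q}{2},
$$
and simplify the right-hand side to $\tfrac{q^2+(2g-1)q-2g\sqrt q(2\sqrt q+1)}{2}$, which is exactly the bound in (\ref{bound}).

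Then I would argue that this bound is attained: since $\mathcal X_q(g)$ is nonempty (as $N_q(g) = q+1+2g\sqrt q$), pick any $X_0 \in \mathcal X_q(g)$; one checks that the value of $B_2(X_0)$ equals $\tfrac{q^2+(2g-1)q-2g\sqrt q(2\sqrt q+1)}{2}$ — this uses that a Weil-optimal smooth curve (having all Frobenius eigenvalues equal to $-\sqrt q$, i.e. $x_1 = -1$) automatically has $x_2 = 1$ in the sense that $\sharp X_0(\mathbb F_{q^2})$ is forced by the zeta function $(qT^2 + [2\sqrt q]T + 1)^g$ with $[2\sqrt q] = 2\sqrt q$, namely $\sharp X_0(\mathbb F_{q^2}) = q^2 + 1 - \sum \omega_j^2 = q^2 + 1 + 2gq$ since $\omega_j = -\sqrt q$ gives $\omega_j^2 = q$... hold on, $\sum_{j=1}^{2g}\omega_j^2 = 2g\cdot q$, so $\sharp X_0(\mathbb F_{q^2}) = q^2+1-2gq$, giving $B_2(X_0) = \tfrac{(q^2+1-2gq)-(q+1+2g\sqrt q)}{2} = \tfrac{q^2 - q - 2gq - 2g\sqrt q}{2}$. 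This does \emph{not} match, so the bound of Proposition \ref{boundb2} is \emph{not} attained by a Weil-optimal curve with all eigenvalues $-\sqrt q$; the maximum $B_2(\mathcal X_q(g))$ is realized by a different optimal curve whose second-order data $x_2$ sits on the parabola. The resolution — and the step I expect to be the main obstacle — is to show that among all smooth curves over $\mathbb F_q$ with exactly $q+1+2g\sqrt q$ rational points there is one whose number of degree-$2$ points meets the Proposition \ref{boundb2} bound; equivalently, that the extremal point $(x_1,x_2) = (-1, 1)$ — no, $(-1, 2\cdot 1 - 1) = (-1,1)$ is degenerate — I would instead need to exhibit an optimal curve with $x_2$ minimal subject to the genuine integrality and Gram constraints, and verify the resulting $B_2$ equals the claimed quantity. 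I expect the honest version of this argument uses the explicit constructions of singular maximal curves from Section 3 of \cite{A-I} applied to a Weil-optimal smooth model together with a careful count of how many degree-$2$ places such a model actually has, so the core difficulty is the \emph{attainability} (lower bound) direction, the non-existence (upper bound) direction being immediate from Proposition \ref{boundb2} and (\ref{iff}).
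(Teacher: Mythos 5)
Your overall strategy is exactly the paper's: reduce to the equivalence (\ref{iff}) via the fact that the normalization of a maximal curve is a smooth maximal curve, and then identify $B_2(\mathcal X_q(g))$ with the number of degree-$2$ points on a smooth maximal curve. The problem is that you talk yourself out of a proof you have already completed. At the end you compute, for a Weil-optimal smooth curve $X_0$ with all Frobenius eigenvalues equal to $-\sqrt q$, that $\sharp X_0(\mathbb F_{q^2})=q^2+1-2gq$ and hence
$$B_2(X_0)=\frac{(q^2+1-2gq)-(q+1+2g\sqrt q)}{2}=\frac{q^2-q-2gq-2g\sqrt q}{2},$$
and then declare that this ``does \emph{not} match'' the claimed bound. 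It does match: expanding the right-hand side of (\ref{bound}) gives
$$\frac{q^2+(2g-1)q-2g\sqrt q(2\sqrt q+1)}{2}=\frac{q^2+2gq-q-4gq-2g\sqrt q}{2}=\frac{q^2-q-2gq-2g\sqrt q}{2},$$
which is identical. Since $q$ is a square and $N_q(g)=q+1+2g\sqrt q$, \emph{every} curve in $\mathcal X_q(g)$ is a smooth maximal curve, all of its eigenvalues are $-\sqrt q$, and its zeta function is determined; so $B_2$ takes this exact value on all of $\mathcal X_q(g)$ (this is precisely Prop.\ 5.8 of \cite{A-I}, which the paper invokes), giving $B_2(\mathcal X_q(g))=\frac{q^2+(2g-1)q-2g\sqrt q(2\sqrt q+1)}{2}$ with no attainability issue whatsoever. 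The entire closing paragraph about needing a ``different optimal curve whose second-order data sits on the parabola'' and about constructions from Section 3 of \cite{A-I} is chasing a nonexistent obstacle.

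Two smaller points. First, in your substitution into Proposition \ref{boundb2} you wrote $-4g^2q$ where $-\frac{1}{g}(2g\sqrt q)^2=-4gq$; the corrected numerator $q^2+2gq-4gq-q-2g\sqrt q$ is what actually simplifies to the stated bound. Second, your detour through the Gram/parabola constraints contains another false alarm: at $x_1=-1$ the forced value $x_2=1$ does \emph{not} give $B_2=0$; it gives $\sharp X(\mathbb F_{q^2})=q^2+1-2gq$ and hence exactly the $B_2$ above, which is nonnegative precisely because $g\le\frac{\sqrt q(\sqrt q-1)}{2}$ for a smooth maximal curve. Once you strike the spurious ``contradictions,'' your argument collapses to the paper's one-line proof.
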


\begin{proof}
The proposition follows directly from  the equivalence $(\ref{iff})$, from the fact that a maximal curve has a maximal normalization and  that the number of closed points of degree $2$ on a smooth maximal curve of genus $g$ over $\mathbb F_q$ is given by
(see Prop. 5.8 of \cite{A-I}): $\frac{q^2+(2g-1)q-2g\sqrt{q}(2\sqrt{q}+1)}{2}$.
\end{proof}

\begin{remark}\label{majorant_de_pi}
The quantity on the right-hand side  of (\ref{bound}), that can be written as  $$ (-q-\sqrt q+1)g+\frac{q^2-q}{2},$$ is a linear decreasing function in the variable $g$. Hence it attains its maximum value for $g=0$ (this also means that the number of closed points of degree $2$ on a maximal smooth curve decreases with the increasing of the genus). So we get also a bound for the arithmetic genus $\pi$ in terms of the cardinality of  the finite field:
 \begin{equation}\label{boundpi}\pi\leq \frac{q(q-1)}{2}.\end{equation}

\end{remark}

Geometrically, we have shown that the set $\Gamma_q$ is  contained in the triangle $(OAB)$ (see Figure \ref{fig:triangle}) of  the plane $<g,\pi>$ delimited by the lines $g=0$, $\pi=(-q-\sqrt q+1)g+\frac{q^2-q}{2}$ and $g= \pi$.

We observe that maximal curves over $\mathbb F_q$ with geometric genus $g=\frac{\sqrt{q}(\sqrt{q}-1)}{2}$ are necessarily smooth and thus isomorphic to the Hermitian curve.

Also the bound (\ref{boundpi}) is  sharp. Indeed the singular plane rational curve  provided in \cite{FHK} is an example of a maximal curve defined over $\mathbb F_q$ with arithmetic genus $\pi= \frac{q(q-1)}{2}$.

Hence, using Proposition \ref{Nqgpi}, the inequalities (\ref{gap2}), Proposition \ref{max} and  Remark \ref{majorant_de_pi}, we can state the following theorem:
\begin{theorem}\label{gpi}
Let $q$ be a square and $X$ be a maximal curve defined over $\mathbb F_q$  with geometric genus $g$ and arithmetic genus $\pi$.

If we set $g':=\frac{\sqrt{q}(\sqrt{q}-1)}{2}$, $g'':= \left\lfloor \frac{(\sqrt{q}-1)^2}{4}\right\rfloor$ and $g''':=\left\lfloor\frac{q-\sqrt{q}+4}{6} \right\rfloor$,
then we have:
\begin{enumerate}
\item $0\leq g\leq g'$ and $g\leq\pi\leq \frac{q(q-1)}{2}$ and also $\pi\leq g+ \frac{q^2+(2g-1)q-2g\sqrt{q}(2\sqrt{q}+1)}{2}$. In other words  $\Gamma_q$ is contained in the set of integer points inside the triangle $(OAB)$.

\vspace{0.3 cm}
\item The point $B=(g',g')$ belongs to $\Gamma_q$ and the set of points
$$\left\{\left( 0,\pi\right), \textrm{ with } 0\leq \pi \leq  \frac{q^2-q}{2}\right\}$$
is contained in $\Gamma_q$.

\item If $g\neq g'$ then $g\leq g''$ and  the set of points
$$\left\{\left( g'',\pi\right), \textrm{ with } g''\leq \pi \leq  (-q-\sqrt q+1)g''+\frac{q^2-q}{2}\right\}$$
is contained in $\Gamma_q$.

\vspace{0.3 cm}
\item If $g\neq g'$ and $g\neq g''$, then $g \leq g'''$ and  the set of points
$$\left\{\left( g''',\pi\right), \textrm{ with } g'''\leq \pi \leq  (-q-\sqrt q+1)g'''+\frac{q^2-q}{2}\right\}$$
is contained in $\Gamma_q$.

\end{enumerate}
\end{theorem}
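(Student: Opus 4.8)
The plan is to deduce the four items from the results gathered above, the main engine being Proposition~\ref{max}, and then to note which two ingredients are not pure bookkeeping.

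\emph{Item (1) (containment).} Since the normalization of a maximal curve is a smooth maximal curve (a consequence of Proposition~5.2 of \cite{A-I} recalled above), the Korchm\'aros--Torres refinement $(\ref{gap2})$ forces the geometric genus to satisfy $g\le g'$, because $g'=\frac{\sqrt q(\sqrt q-1)}{2}$ is the largest of the three admissible values occurring there. The chain $g\le\pi\le g+\frac{q^2+(2g-1)q-2g\sqrt q(2\sqrt q+1)}{2}$ is exactly the content of Proposition~\ref{max}, and Remark~\ref{majorant_de_pi} rewrites the right-hand side as $(-q-\sqrt q+1)g+\frac{q^2-q}{2}$, a decreasing linear function of $g$, whence also $\pi\le\frac{q(q-1)}{2}$. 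Taken together, these say precisely that $\Gamma_q$ lies among the integer points of the triangle $(OAB)$.

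\emph{Items (2)--(4) (the filling mechanism).} Whenever $\mathbb F_q$ carries a \emph{smooth} maximal curve of some genus $g_0$, one has $N_q(g_0)=q+1+2g_0\sqrt q$, so the ``if'' direction of Proposition~\ref{max} produces, for every integer $\pi$ with $g_0\le\pi\le(-q-\sqrt q+1)g_0+\frac{q^2-q}{2}$, a maximal curve of geometric genus $g_0$ and arithmetic genus $\pi$; equivalently, the whole vertical segment over $g_0$ inside $(OAB)$ lies in $\Gamma_q$. It therefore suffices to invoke smooth maximal curves at the three distinguished genera: the Hermitian curve at $g=g'$ (which, being smooth, yields only the vertex $B=(g',g')$ in item (2)); the Fuhrmann--Garcia--Torres model $y^{\sqrt q}+y=x^{(\sqrt q+1)/2}$ for $q$ odd, respectively the Abd\'on--Torres model $y^{\sqrt q/2}+\cdots+y=x^{\sqrt q+1}$ for $q$ even, at $g=g''$ (one checks that $g''=\lfloor(\sqrt q-1)^2/4\rfloor$ equals $\frac{(\sqrt q-1)^2}{4}$, resp. $\frac{\sqrt q(\sqrt q-2)}{4}$, in the two parities); and the Korchm\'aros--Torres non-singular models at $g=g'''=\lfloor(q-\sqrt q+4)/6\rfloor$. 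The bottom edge $\{(0,\pi):0\le\pi\le\frac{q^2-q}{2}\}$ of item (2) comes instead from Proposition~\ref{Nqgpi}(1): since $[2\sqrt q]=2\sqrt q$ for $q$ square, a $\delta$-optimal genus-$0$ curve is maximal, and $N_q(0,\pi)=q+1+\pi$ exactly on that range. Finally, the conditional restrictions ``$g\neq g'\Rightarrow g\le g''$'' and ``$g\neq g',g''\Rightarrow g\le g'''$'' follow from $(\ref{gap2})$ once one observes that $g'''\le g''$ for every square $q$ (asymptotically $q/6$ versus $q/4$, with the finitely many small cases checked directly), so that the three-way disjunction of $(\ref{gap2})$ collapses as asserted.

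\emph{Main obstacle.} The genuinely non-formal point is that items (3)--(4) require the values $g''$ and $g'''$ to be actually \emph{attained} by smooth maximal curves, whereas the gap statements $(\ref{gap1})$ and $(\ref{gap2})$ by themselves only exclude genera strictly between consecutive admissible values. This is why the explicit plane models of \cite{FGT}, \cite{A-T} and \cite{K-T} must be quoted and their genera matched against the floor expressions $g''$ and $g'''$ in each parity of $q$ — the one step of the argument that is not routine manipulation inside the triangle $(OAB)$ together with Proposition~\ref{max}.
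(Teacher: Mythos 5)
Your proof is correct and follows essentially the same route as the paper, whose own proof simply assembles Proposition \ref{Nqgpi}, the gap statement (\ref{gap2}), Proposition \ref{max} and Remark \ref{majorant_de_pi}; you usefully make explicit the key ``filling'' step (a smooth maximal curve of genus $g_0$ gives $N_q(g_0)=q+1+2g_0\sqrt q$, so the ``if'' direction of Proposition \ref{max} puts the whole vertical segment over $g_0$ into $\Gamma_q$) and the need for the explicit models of \cite{FGT}, \cite{A-T} and \cite{K-T} attaining $g''$ and $g'''$. The one caveat, which you share with the paper's own transcription of (\ref{gap2}), is that the deduction of item (4) really needs the Korchm\'aros--Torres middle alternative in its original form $g=g''$ (an equality): from the inequality version, $g\neq g''$ together with $g\le g''$ only yields $g<g''$, which does not by itself give $g\le g'''$.
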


We can illustrate Theorem \ref{gpi} with the following  figure (in which the aspect ratio has been chosen equal to 0.025):
\begin{figure}[H]
\includegraphics[scale=0.25]{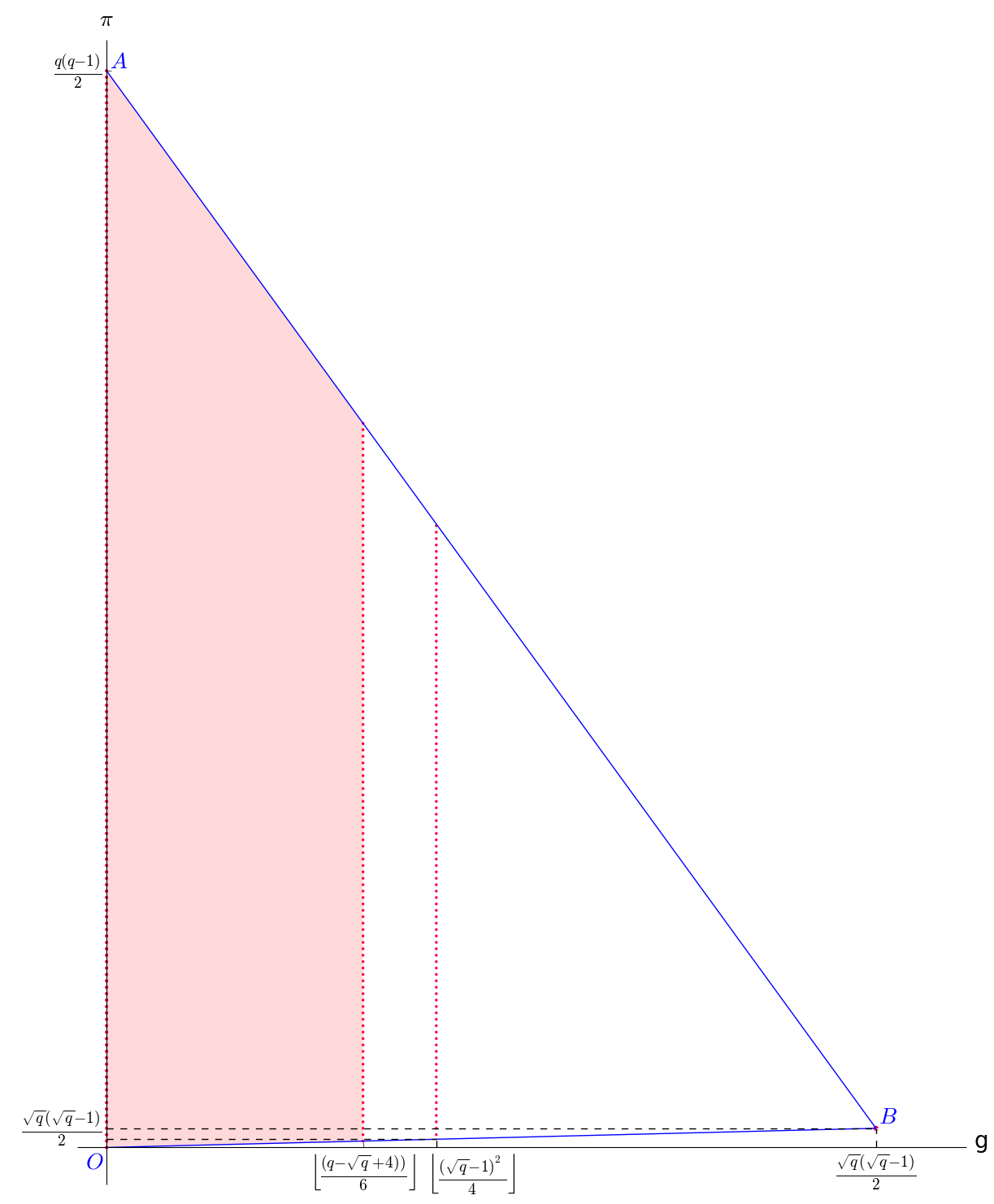}
 \caption{{\footnotesize{The set $\Gamma_q$ is contained in the set of integers points inside the triangle $(OAB)$. The dots correspond to the couples $(g,\pi)$ that we have proved to be in $\Gamma_q$. The rest of the set $\Gamma_q$ has to be contained in the colored trapezoid. }}
}
\label{fig:triangle}
\end{figure}

\bigskip
\bigskip

We conclude the paper by considerations on coverings of singular curves.

If $f:Y\rightarrow X$ is a surjective morphism of smooth curves defined over $\mathbb F_q$ and if $Y$ is maximal then $X$ is also maximal. This result is due to Serre (see \cite{lachaud}).
We prove here that the result still holds without the smoothness hypothesis of the curves but with the flatness hypothesis of the morphism.
Remark that the divisibility of the numerators of the zeta functions in a flat covering proved in \cite{A-P_Archiv} for possibly singular curves and in \cite{A-P1} for possibly singular varieties does not lead the result.

\begin{theorem}
Let $f:Y\rightarrow X$ be a finite flat morphism between two curves defined over  $\mathbb F_q$. If $Y$ is maximal then $X$ is maximal.
\end{theorem}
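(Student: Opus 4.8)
The plan is to split the maximality of $X$ into a geometric part (about $\tilde X$) and an arithmetic part (about the singularities of $X$). Recall from~\cite{A-P,A-I} that $\sharp X(\mathbb F_q)\le\sharp\tilde X(\mathbb F_q)+(\pi-g)$ and $\sharp\tilde X(\mathbb F_q)\le q+1+g[2\sqrt q]$; so $X$ is maximal if and only if $\tilde X$ is a smooth maximal curve and $\sharp X(\mathbb F_q)=\sharp\tilde X(\mathbb F_q)+(\pi-g)$, and — by Proposition~5.2 of~\cite{A-I} and the shape of the zeta function of a maximal curve recalled above — the latter holds exactly when every singular point of $X$ is an $\mathbb F_q$-rational ordinary double point whose two branches are conjugate over $\mathbb F_{q^2}$ (a \emph{conjugate node}). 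Thus I would prove: (i) $\tilde X$ is smooth maximal, and (ii) every singular point of $X$ is a conjugate node. For (i): $f$ is surjective of degree $d=[\mathbb F_q(Y):\mathbb F_q(X)]$; since $\tilde Y$ is normal, $\tilde Y\to Y\xrightarrow{f}X$ factors uniquely through $\nu_X$, giving a finite morphism $\tilde f\colon\tilde Y\to\tilde X$ with $\nu_X\circ\tilde f=f\circ\nu_Y$ and $\deg\tilde f=d$, which is moreover flat (finite between regular curves). As $Y$ is maximal, $\tilde Y$ is smooth maximal (easy consequence of Proposition~5.2 of~\cite{A-I}), so Serre's theorem in the smooth case (\cite{lachaud}), applied to $\tilde f$, shows $\tilde X$ is smooth maximal; in particular $N_q(g)=q+1+g[2\sqrt q]$.

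The core of the argument is (ii), where I would work locally and use flatness to exclude ramification over $\operatorname{Sing}(X)$. Fix a singular closed point $x\in X$, and let $\mathcal O_{X,x},\mathcal O_{Y,x},\mathcal O_{\tilde X,x},\mathcal O_{\tilde Y,x}$ be the corresponding (semi)local rings over $x$. By flatness of $f$ and of $\tilde f$, $\mathcal O_{Y,x}$ is free of rank $d$ over $\mathcal O_{X,x}$ and $\mathcal O_{\tilde Y,x}$ is free of rank $d$ over $\mathcal O_{\tilde X,x}$; since $\nu_X\circ\tilde f=f\circ\nu_Y$, this gives $\dim_{\mathbb F_q}(\mathcal O_{\tilde Y,x}/\mathfrak m_x\mathcal O_{\tilde Y,x})=d\cdot\dim_{\mathbb F_q}(\mathcal O_{\tilde X,x}/\mathfrak m_x\mathcal O_{\tilde X,x})\ge 2d\deg x$, the last step because $x$ is singular (Nakayama forces $\dim_{\kappa(x)}(\mathcal O_{\tilde X,x}/\mathfrak m_x\mathcal O_{\tilde X,x})\ge 2$ unless $x$ is smooth). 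On the other hand, tensoring $0\to\mathcal O_{Y,x}\to\mathcal O_{\tilde Y,x}\to\mathcal S_x\to 0$ with $\kappa(x)$ over $\mathcal O_{X,x}$ yields $\dim_{\mathbb F_q}(\mathcal O_{\tilde Y,x}/\mathfrak m_x\mathcal O_{\tilde Y,x})\le d\deg x+\dim_{\mathbb F_q}\mathcal S_x$, where $\mathcal S_x=\bigoplus_{y\mapsto x}\mathcal O_{\tilde Y,y}/\mathcal O_{Y,y}$; because $Y$ is maximal each singular $y$ is a conjugate node, so $\dim_{\mathbb F_q}\mathcal S_x$ is the number $m$ of singular points of $Y$ over $x$. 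Comparing the two estimates gives $m\ge d\deg x$, while trivially $m\le\sharp f^{-1}(x)\le\dim_{\mathbb F_q}\mathcal O_{f^{-1}(x)}=d\deg x$. Hence all inequalities are equalities: $f^{-1}(x)$ is reduced with all residue fields equal to $\mathbb F_q$ — so $f$ is étale over $x$ and $x$ is $\mathbb F_q$-rational — and every point of $f^{-1}(x)$ is a conjugate node of $Y$.

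To conclude, since $f$ is étale over $x$ with trivial residue extension we have $\widehat{\mathcal O}_{X,x}\cong\widehat{\mathcal O}_{Y,y}$ for $y\in f^{-1}(x)$, and $y$ being a conjugate node forces $x$ to be one too; hence every singular point of $X$ is a conjugate node and $X$ is maximal by the reduction above. The hard part is exactly this local step: it needs the precise description of the singularities carried by a maximal curve (conjugate nodes, from~\cite{A-I}) together with a way to control $f$ over $\operatorname{Sing}(X)$, which is where the flatness hypothesis enters, through the length identity above. By contrast, the divisibility of the zeta numerators valid for flat coverings (\cite{A-P_Archiv}) is not enough, as noted before the statement: it only shows that $Z_X(T)/Z_{\tilde X}(T)$ is a power of $(1+T)$ of degree at most $\pi_Y-g_Y$, which still allows $X$ to have cusps, tangent conjugate branches, or singular points of higher degree dropping $\sharp X(\mathbb F_q)$ below the bound — phenomena the flatness of $f$ rules out.
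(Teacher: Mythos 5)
Your proof is correct, but it takes a genuinely different --- and much heavier --- route than the paper's. The paper's argument is a three-line global computation: it quotes the covering inequality $|\sharp Y(\mathbb F_q)-\sharp X(\mathbb F_q)|\leq (\pi_Y-g_Y)-(\pi_X-g_X)+(g_Y-g_X)[2\sqrt{q}]$, valid for any finite flat morphism of curves (Remark 4.1 of \cite{A-P2}), plugs in the maximality of $Y$ to get $\sharp X(\mathbb F_q)\geq q+1+g_X[2\sqrt{q}]+\pi_X-g_X$, and concludes by the upper bound (\ref{APbound}); flatness enters only through the quoted inequality. You instead re-prove the statement from scratch by splitting maximality into a smooth part (Serre's theorem applied to the induced map $\tilde f:\tilde Y\to\tilde X$) and a structural part (every singularity of a maximal curve is a rational node with conjugate branches, which is indeed what equality in $\sharp X(\mathbb F_q)\le\sharp\tilde X(\mathbb F_q)+\pi-g$ forces), and you use flatness directly through the length identity $\dim_{\mathbb F_q}(\mathcal O_{\tilde Y,x}/\mathfrak m_x\mathcal O_{\tilde Y,x})=d\cdot\dim_{\mathbb F_q}(\mathcal O_{\tilde X,x}/\mathfrak m_x\mathcal O_{\tilde X,x})$ to squeeze the fibre over a singular point $x$ of $X$. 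I checked the sandwich $d\deg x\le m\le\sharp f^{-1}(x)\le d\deg x$ and the resulting conclusions (reduced rational fibre, hence $f$ \'etale over $x$, hence $\widehat{\mathcal O}_{X,x}\cong\widehat{\mathcal O}_{Y,y}$), and the argument is sound. Your approach is longer and leans on the classification of singularities of maximal curves from \cite{A-I}, but it buys strictly more information than the paper's counting argument: it shows that a finite flat cover by a maximal curve is necessarily \'etale over $\operatorname{Sing}(X)$ with totally split rational fibres, so the singularities of $X$ are copies of singularities of $Y$ --- none of which is visible in the paper's proof.
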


\begin{proof}
Let us denote by $g_X$  and $\pi_X$ (respectively $g_Y$ and $\pi_Y$)  the geometric genus and the arithmetic genus of $X$ (respectively of $Y$).
As $Y$ is maximal, we have
$$\sharp Y(\mathbb F_q)=q+1+g_Y[2\sqrt{q}]+\pi_Y-g_Y.$$
From Remark 4.1 of \cite{A-P2} we know that
$$|\sharp Y(\mathbb F_q)-\sharp X(\mathbb F_q)|\leq (\pi_Y-g_Y)-(\pi_X-g_X)+(g_Y-g_X)[2\sqrt{q}].$$
So we obtain:
\begin{align*}\sharp X(\mathbb F_q)&\geq \sharp Y(\mathbb F_q) -(\pi_Y-g_Y)+(\pi_X-g_X)-(g_Y-g_X)[2\sqrt{q}]\\&= q+1+g_X[2\sqrt{q}]+ \pi_X-g_X.\end{align*}
Hence $X$ is also maximal.
\end{proof}

\bibliography{biblio} 
\bibliographystyle{plain}

\end{document}